\numberwithin{equation}{section}
\DeclareMathOperator{\Var}{Var}
\DeclareMathOperator*{\diag}{diag}
\DeclareMathOperator*{\rank}{rank}
\newcommand{\beq}{ \begin{equation} }
\newcommand{\eeq}{ \end{equation} }
\newcommand{\beqq}{ \begin{equation*} }
\newcommand{\eeqq}{ \end{equation*} }
\newcommand{\ip}[1]{\langle {#1} \rangle }
\newcommand{\ipa}[1]{\left\langle {#1} \right\rangle }
\def \dd {\mathrm{d}}
\def \a {\alpha}
\def \b {\beta}
\def \d {\delta}
\def \la {\lambda}
\def \s {\sigma}
\def \r {\rho}
\newcommand{\bR}{\mathbb{R}}
\newcommand{\bE}{\mathbb{E}}
\def \EE {\mathbb{E}}
\def \RR {\mathbb{R}}
\def \cN {\mathcal{N}}
\def \cR {\mathcal{R}}
\newtheorem{theorem}{Theorem}[section]
\newtheorem{lemma}[theorem]{Lemma}
\newtheorem{definition}[theorem]{Definition}
\theoremstyle{remark}
\begin{document}

\title{Order of fluctuations of the free energy in the positive semi-definite MSK model at critical temperature}
\author{Elizabeth W. Collins-Woodfin\footnote{Department of Mathematics \& Statistics, McGill University,
Montreal, QC, H3A 0G4, Canada \newline email: \texttt{elizabeth.collins-woodfin@mail.mcgill.ca}} \and Han Gia Le\footnote{Department of Mathematics, University of Michigan,
Ann Arbor, MI, 48109, USA \newline email: \texttt{hanle@umich.edu}}}
	\date{\today}

	\maketitle

\begin{abstract}
In this note, we consider the multi-species Sherrington-Kirkpatrick spin glass model at its conjectured critical temperature, and we show that, when the variance profile matrix $\Delta^2$ is positive semi-definite, the variance of the free energy is $O(\log^2N)$.  Furthermore, when one approaches this temperature threshold from the low temperature side at a rate of $O(N^{-\a})$ with $\a>0$, the variance is $O(\log^2N+N^{1-\a})$.
This result is a direct extension of the work of Chen and Lam (2019) who proved an analogous result for the SK model, and our proof methods are adapted from theirs.
\end{abstract}

\section{Introduction}
The Sherrington--Kirkpatrick (SK) model is a model in statistical mechanics, introduced in 1975 to study magnetic alloys \cite{SherringtonKirkpatrick1975}. Its Hamiltonian $H_N^{SK}: \{-1, 1\}^N \to \bR$ is given by $\s \mapsto \frac1{\sqrt N}\sum_{i,j=1}^N g_{ij}\s_i\s_j$, where $(g_{ij})_{i,j=1}^N$ are independent standard Gaussian random variables. The free energy of the model at each inverse temperature $\b>0$ is defined as 
\beqq
F^{SK}_N(\b) = \log\sum_{\s \in \{-1,1\}^N} \exp\left(\b H^{SK}_N(\s)\right)
\eeqq
As $N\to\infty$, the limiting free energy is known for every $\b>0$, established in breakthrough works \cite{Parisi1979, Parisi1980} and \cite{Guerra03, TalagrandSK, Panchenk14}. The fluctuations are also known in the high temperature regime $\b<\frac1{\sqrt2}$ \cite{AizenmanLebowitzRuelle}. The fluctuations remain open for the low temperature regime. 

There is great interest in obtaining the order of fluctuations of the free energy at the critical temperature $\b_c:=\frac1{\sqrt2}$. In particular, the works \cite{Talagrandvol1, Talagrandvol2, Chatterjee09} together yield $\Var(F_N^{SK}(\b_c))= O(N^{1/2})$. At the time of this note, the best bound is due to Chen and Lam \cite{ChenLam19}, where they successfully showed that 
\beq
\Var(F_N^{SK}(\b_c))= O(\log^2 N ).
\eeq
This is a step closer to the conjecture $\Var(F_N^{SK}(\b_c))= \frac16 \log N + O(N^{-2})$, based on various works including \cite{conjSk2,conjSK1, ParisiRizzo09}.

In this note, we study the variance bound of the free energy for multi-species SK (MSK) model at the critical temperature, with the goal of deriving a bound analogous to the one in \cite{ChenLam19} for the SK model.

In contrast to the SK model, the spins in the MSK model are divided into a fixed number of species, and the proportion of each species is asymptotically fixed as the number of spins goes to infinity. Moreover, the variance of the spin interactions $g_{ij}$ is no longer constant (e.g., it is constant 1 in the previously described SK model), but depends on the species structure.
 
\begin{definition}[MSK model]
    Fix $k\geq 1$. Let $\Lambda = \diag(\a_1,\dots, \a_k)$ be a matrix satisfying $\a_i\in (0,1)$ for all $i$ and $\sum_{i=1}^k \a_i=1$. Let $\Delta^2$ be a $k\times k$ symmetric matrix of non-negative entries. 
    An MSK model with species density matrix $\Lambda$ and variance profile matrix $\Delta^2$ satisfies the following assumptions.
    \begin{itemize}
        \item Given $N \geq k$, there is a partition of spins $\{1,2,\dots, N\} = \cup_{s=1}^k I_s$, where
        $\Lambda_N:=\frac1N\diag(|I_1|,...,|I_k|)$ and
         $\lim_{N\to\infty}\Lambda_N=\Lambda$.
        \item Given a spin configuration $\s\in \{-1,1\}^N$, the Hamiltonian is given by 
        \beq
        H_N(\s) = \frac1{\sqrt N}\sum_{i,j=1}^N g_{ij}\s_i\s_j
        \eeq
        where $g_{ij}$ are independent centered Gaussian variables and $\bE[g_{ij}^2] = \Delta^2_{s,t}$ for $i\in I_s$ and $j\in I_t$.
    \end{itemize}
\end{definition}

We consider the free energy of the model at inverse temperature $\b>0$, which is defined as 
\beq\label{eq:fe}
F_N(\b)= \log \sum_{\s \in \{-1,1\}^N} \exp \left(\b H_N(\s)\right).
\eeq

In the case where $\Delta^2$ is positive-definite, the limiting free energy is known for all fixed $
\b>0$. In particular, Barra et al. in  \cite{Barra15} proposed Parisi’s formula for the free energy and proved the upper bound. The matching lower bound is verified by Panchenko \cite{Panchenko15}. Notably, the lower bound does rely on the positive-definite assumption. 

In addition, fluctuations of the free energy in the high temperature regime are known. Here, our notion of high and low temperature regimes is analogous to those for the SK model in the sense that the high temperature regime refers to the subset of the parameter space where the so-called replica symmetric solution holds, while in the low temperature regime, there is a ``replica symmetry breaking" (RSB) phenomenon. For the SK model, the so-called de Almeida--Thouless (AT) line is conjectured to be the boundary between these two regimes. See 
\cite{Talagrandvol1, Talagrandvol2}
% in particular Sections 1.8, 1.9 \cite{Talagrandvol1}  and part II (Section 13.3) of \cit{Talagrandvol2}, BSS19 
for more detailed discussions. Regarding the MSK model, the authors in \cite{BatesSolmanSohn19} derived the analogous AT line condition for a two-species SK model, with a closed-form formula for the temperature threshold $\b_c$ in terms of $\Lambda$ and $\Delta^2$. This result is extended to $k$-species SK, for $k\geq 2$, in \cite{DeyWu21}. 
In the case of no external field (which is the setting of this note) the authors of \cite{DeyWu21} prove that MSK is replica-symmetric for $\b<\b_c$ where
%In this case, it is conjectured that the critical inverse temperature is \Cb{need to reconsider the wording of this being a conjecture}
\beq\label{eq:bcrit}
\b_c= \rho(2\Lambda \Delta^2)^{-1/2}
\eeq
and $\rho(\cdot)$ denotes the spectral radius.  They demonstrate examples of MSK model that are RSB for all $\b>\b_c$, but the proof for general MSK remains open (hence $\b_c$ is the conjectured critical inverse temperature). The authors in \cite{DeyWu21} then derive a law of large numbers and a central limit theorem for the free energy at every $\b< \b_c$, even when $\Delta^2$ is indefinite (the result only holds for $\b<\b_0<\b_c$ where there exists an external field, however). See, for example, \cite{Albericietal21, DeyWu23, Wu24} for discussions of RSB and other results in MSK model, some of which do not rely on the positive semi-definite assumption.

In this note, we verify that $\Var(F_N(\b_c))= O(\log^2 N)$ under the assumption that $\Delta^2$ is positive semi-definite. In addition, under the same assumption, we obtain an upper bound on the fluctuations when we approach the critical temperature from the low temperature regime.
\begin{theorem}\label{thm:maintheorem}
    Consider an MSK model with species density matrix $\Lambda=\Lambda_N+O(N^{-1})$ and variance profile matrix $\Delta^2$. Let $F_N(\b)$ be the free energy as defined in \eqref{eq:fe}, and $\b_c$ be as given in \eqref{eq:bcrit}. If $\Delta^2$ is positive semi-definite, then the following statements hold:
    \begin{enumerate}
    \item There exists a constant $C>0$ such that 
    \beq
    	\Var\left(F_N(\b_c)\right) \leq C \left( (\log N)^2 +1\right).
    \eeq
    \item For fixed $\a>0$ and $d>0$, there is a constant $C>0$, depending only on $\a$ and $\d$, such that
    \beq
    	\Var\left(F_N(\sqrt{\b_c^2+dN^{-\a}})\right) \leq C \left( (\log N)^2 + N^{1-\a}\right).
    \eeq
    \end{enumerate}
\end{theorem}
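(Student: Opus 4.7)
I plan to follow the two-step strategy of Chen and Lam \cite{ChenLam19} from the SK setting, adapted to the matrix structure of MSK and exploiting $\Delta^2\succeq 0$ via the MSK Parisi formula of \cite{Barra15,Panchenko15}.

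\textbf{Step 1 (Gaussian Poincar\'e).} Write $g_{ij} = \Delta_{s(i),t(j)}\tilde g_{ij}$ with $\tilde g_{ij}$ i.i.d.\ $\cN(0,1)$ and apply the Gaussian Poincar\'e inequality to $F_N$ seen as a function of $(\tilde g_{ij})$:
\[\Var(F_N(\b)) \le \sum_{i,j}\Delta^2_{s(i),t(j)}\,\EE\!\left[\bigl(\partial_{g_{ij}}F_N(\b)\bigr)^2\right].\]
Substituting $\partial_{g_{ij}}F_N(\b) = (\b/\sqrt N)\langle\s_i\s_j\rangle$, using the two-replica identity $\EE\langle\s_i\s_j\rangle^2 = \EE\langle\s_i^1\s_j^1\s_i^2\s_j^2\rangle$, and regrouping indices by species, this becomes
\[\Var(F_N(\b)) \le \b^2 N\,\EE\!\left\langle\vec R^{\top}(\Lambda_N\Delta^2\Lambda_N)\vec R\right\rangle,\]
where $\vec R\in\RR^k$ has components $R_s = |I_s|^{-1}\sum_{i\in I_s}\s^1_i\s^2_i$ and $\Lambda_N\Delta^2\Lambda_N\succeq 0$.

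\textbf{Step 2 (Matrix-weighted overlap bound).} Writing $M_N:=\Lambda_N\Delta^2\Lambda_N$ and $c_N:=\sum_{s,t}\Delta^2_{s,t}(\Lambda_N)_{ss}(\Lambda_N)_{tt}$, a standard Gaussian integration by parts applied to $\EE\langle H_N\rangle$ yields the identity
\[\partial_{\b^2}\EE[F_N(\b)] \;=\; \frac{N}{2}\bigl(c_N - \EE\langle\vec R^{\top}M_N\vec R\rangle\bigr),\]
so the matrix-weighted overlap measures the gap between the RS slope $Nc_N/2$ and the true slope of the convex function $\b^2\mapsto\EE[F_N(\b)]$. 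Under $\Delta^2\succeq 0$, the Parisi formula of \cite{Barra15,Panchenko15} gives $\EE[F_N(\b)]/N \to \mathcal P(\b)$, with $\mathcal P\equiv \mathcal P_{RS}$ on $[0,\b_c]$ and $\mathcal P-\mathcal P_{RS}$ detaching from zero above $\b_c$ at a controlled rate. Following Chen-Lam, I would combine (a) convexity of $\EE[F_N]$ in $\b^2$, (b) the Guerra upper bound $\EE[F_N(\b)]\le N\mathcal P(\b)$, and (c) a finite-$N$ bootstrap for $\EE[F_N(\b)]-N\mathcal P_{RS}(\b)$ in a near-critical window, optimizing an interpolation width $\d$ (of order $N^{-1/2}\log N$ at criticality, or $N^{-\a/2}$ in part (2)) to obtain
\[\EE\langle\vec R^{\top}M_N\vec R\rangle \;\le\; \frac{C}{N}\bigl((\log N)^2 + N^{1-\a}\bigr).\]
Combining this with Step 1 yields both statements of the theorem.

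\textbf{Main obstacle.} The principal technical difficulty is Step 2: obtaining a finite-$N$ control on $\EE[F_N(\b)]-N\mathcal P(\b)$ uniformly in a small window around $\b_c$, together with a quantitative detachment rate for $\mathcal P-\mathcal P_{RS}$ on the low-temperature side. Both rely on $\Delta^2\succeq 0$: it is the condition under which the MSK Parisi formula holds rigorously, and it also ensures the quadratic form $\vec R^{\top}M_N\vec R$ appearing in Step 1 is non-negative, so that the Poincar\'e-derived bound has the correct sign. Adapting the Chen-Lam iteration and $\d$-optimization to the matrix-weighted overlap, where the role of the scalar $R_{12}^2$ is played by $\vec R^{\top}M_N\vec R$ and the spectrum of $\Lambda\Delta^2$ enters through the critical temperature $\b_c^2 = 1/\rho(2\Lambda\Delta^2)$, is where the real work lies.
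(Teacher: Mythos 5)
The proposal does not actually follow Chen--Lam, and both of its two steps have gaps that would block the argument.

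\textbf{Step 1 is a strictly weaker input than what the paper (and Chen--Lam) actually use.} The paper starts not from the Gaussian Poincar\'e inequality but from Chatterjee's variance \emph{identity} \eqref{eq:Chatterjee}, $\Var(F_N)=\b^2N\int_0^1\EE\ip{\cR}_t\,\dd t$, where $\ip{\cdot}_t$ is the two-replica Gibbs measure driven by two $t$-interpolated Hamiltonians. Your Poincar\'e bound $\Var(F_N)\le\b^2N\,\EE\ip{\cR}$ is exactly the evaluation of Chatterjee's integrand at the single point $t=1$ (where the coupled measure collapses to the ordinary two-replica product measure). The entire value of Chatterjee's identity here is that $\EE\ip{\cR}_t$ is $O(1/N)$ for $t$ bounded away from $1$ and only blows up as $t\to1$: the $\log^2 N$ comes from integrating the bound of Lemma~\ref{lem:mainlemma}, $\EE\ip{\cR}_t\lesssim \frac{1}{N(1-t)}\log\frac{1}{1-t}$, over $t\in[0,1-\tfrac1N]$ and using a crude $O(1)$ bound on $[1-\tfrac1N,1]$. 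Replacing this average by the worst-case endpoint throws away precisely the structure that produces the logarithm; there is no reason $\EE\ip{\cR}_{t=1}$ itself should be $O(\log^2N/N)$ at $\b_c$ (the physics literature on SK suggests it is polynomially larger), and in any case proving that would be an independent and harder problem.

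\textbf{Step 2 invokes machinery that Chen--Lam do not use and that is not available at the needed precision.} Chen--Lam's argument is entirely self-contained: it never appeals to the Parisi formula, to Guerra's upper bound, to the limiting free energy, or to convexity of $\EE F_N$ in $\b^2$. The paper instead introduces a second interpolation $\phi_N(t,\la)$ in an auxiliary parameter $\la$ coupling to $\cR$, uses Gaussian integration by parts to compute $\partial_1\phi_N$ and $\partial_1\Phi_N$, uses positivity of $\cR$ (from $\Delta^2\succeq0$) to get $\phi_N(t,\la)\le\phi_N(0,\la+t)$, and then convexity of $\phi_N$ in $\la$ plus Jensen to reduce everything to the annealed quantity $\EE\ip{\exp(xN\cR)}_{0,0}$, which is computed exactly via a Gaussian linearization (Lemma~\ref{lem:Talagrand_bound}). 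Your proposed route via $\partial_{\b^2}\EE F_N=\tfrac N2(c_N-\EE\ip{\cR})$, convexity in $\b^2$, and ``finite-$N$ bootstrap for $\EE F_N-N\cP_{RS}$ in a near-critical window'' would require a quantitative $O(\log^2 N)$ control on $\EE F_N(\b_c)-N\cP_{RS}(\b_c)$. That bound is not available in the MSK literature (nor even for SK in the form you'd need), and establishing it is essentially equivalent to---or harder than---the theorem you are trying to prove, so the argument is circular at that point. You correctly identify this as ``where the real work lies''; the point is that Chen--Lam's (and this paper's) technique is designed precisely to \emph{avoid} ever needing that input.

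\textbf{One correct observation to keep.} Your remark that $\Delta^2\succeq0$ is needed both for positivity of the overlap quadratic form and for the existence of a Gaussian vector with covariance $\Delta^2$ is accurate and matches how the hypothesis enters the actual proof (in the sign of $\partial_1\Phi_N$ and in Lemma~\ref{lem:Talagrand_bound}, respectively). But the surrounding architecture needs to be replaced by the Chatterjee-identity/two-parameter-interpolation scheme to make the argument go through.
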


\subsection{Proof set-up}

Understanding the variance of the free energy begins with the basic observation that, for spin vectors $\s,\r$, the Hamiltonian has covariance
\beq
\EE H_N(\s)H_N(\r)=N\cR(\s,\r):=Nv^T\Delta^2 v\quad \text{where }v=\begin{bmatrix}R_1\\ \vdots \\ R_k\end{bmatrix}\text{ and }R_s=\frac1N\sum_{i\in I_s}\s_i\r_i.
\eeq
We refer to $\{R_s\}_{s=1}^k$ as the within-species overlaps and $\cR$ as the multi-overlap.  %We note that, when $\Delta^2$ is positive semi-definite, $\cR(\s,\r)\geq0$ for all $\s,\r$ (the SK model corresponds to the case of $k=1$, where this inequality holds trivially).  Our current proof methods require $\cR(\s,\r)\geq0$, hence our restriction to positive semi-definite $\Delta^2$.

The starting point of the proof of our theorem is a variance identity, obtained by Chatterjee \cite{Chatterjee09} in a study of disorder chaos in spin glass.  In particular, the variance of the free energy can be expressed in terms of an interpolating Gibbs measure.  Let $H'_N,H''_N$ be independent copies of $H_N$ and, for $t\in[0,1]$ and $\s,\r\in\{-1,1\}^N$, define
\beq\begin{split}
H^1_{N,t}(\s)&=\sqrt{t}H_N(\s)+\sqrt{1-t}H'_N(\s),\\
H^2_{N,t}(\r)&=\sqrt{t}H_N(\r)+\sqrt{1-t}H''_N(\r).
\end{split}\eeq
Then we have covariance $\EE H_{N,t}^1(\s)H_{N,t}^2(\r)=tN\cR(\s,\r)$.
Applying Theorem 3.8 from \cite{Chatterjee09} (with the change of variable $e^{-2t}\mapsto t$) yields the identity
\beq\label{eq:Chatterjee}
\Var(F_N(\b))=\b^2N\int_0^1\EE\ip{\cR(\s,\r)}_t\dd t,
\eeq
where $\ip{\cdot}_t$ denotes Gibbs expectation with respect to the interpolating measure
\beq
\frac{\exp(\b(H^1_{N,t}(\s)+H^2_{N,t}(\r)))}{\sum_{\s,\r}\exp(\b(H^1_{N,t}(\s)+H^2_{N,t}(\r)))}.
\eeq

Using this identity, the main work in proving our theorem lies in bounding the quantity $\EE\ip{\cR(\s,\r)}_t$.  This requires a second interpolation argument, adapted from \cite{ChenLam19}, which we present in the next section.

%From then, the main work lies in bounding covariances of the Hamiltonian, after taking expectation with respect to both the Gibbs measure, and the Gaussian disorder.

\subsection{Beyond the positive semi-definite case}

It is natural to wonder if a similar bound applies to the case of indefinite $\Delta^2$. For instance, the bipartite SK model, with $k=2$ and $\Delta^2=[\begin{smallmatrix}0&1\\1&0\end{smallmatrix}]$, is a special example.  Our restriction to the case of positive semi-definite $\Delta^2$ arises in the proof of Lemma \ref{lem:mainlemma}.  In particular, we use the fact that, when $\Delta^2$ is positive semi-definite, $\cR(\s,\r)\geq0$ for all $\s,\r$ (the SK model corresponds to the case of $k=1$, where this inequality holds trivially).  Furthermore, in the proof for Lemma \ref{lem:Talagrand_bound}, we use the fact that there exists some multivariate Gaussian distribution with $\Delta^2$ as its covariance matrix.  Again, this holds trivially in the $k=1$ case (SK), but requires positive semi-definite $\Delta^2$ in general.  Modifying these proofs (or developing entirely new ones) to treat the case of indefinite $\Delta^2$ remains a goal for future work. 

\section{Proof}
Using the variance identity \eqref{eq:Chatterjee}, our main task is to bound the double expectation $\bE \langle \cR(\s, \r) \rangle_{t}$, which is achieved in the following lemma.

\begin{lemma}\label{lem:mainlemma} Consider a $k$-species model with positive semi-definite $\Delta^2$ and let $r=\rank(\Delta^2)$. Then, for any $t\in[0,1]$ such that $\b^2\b_c^{-2}t<1$, we have 
\beq 
\EE\ip{\cR(\s,\r)}_{t}\leq\frac{r}{N}\frac{\b_c^{-2}}{1-\b^2\b_c^{-2}t}\log\frac{2}{1-\b^2\b_c^{-2}t}.
\eeq
\end{lemma}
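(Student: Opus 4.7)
I adapt the second interpolation argument of Chen--Lam \cite{ChenLam19} from the SK case to the multi-species case, using the positive semi-definite structure of $\Delta^2$ essentially throughout.

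The PSD assumption first supplies the spectral decomposition $\Delta^2 = \sum_{l=1}^r \mu_l u_l u_l^\top$ with $\mu_l > 0$, whence $\cR(\s,\r) = \sum_{l=1}^r \mu_l q_l(\s,\r)^2 \geq 0$, where $q_l(\s,\r) = u_l^\top v(\s,\r) = \sum_{s=1}^k u_{l,s} R_s(\s,\r)$. The non-negativity of $\cR$ allows one to consider a tilted Gibbs measure for $u \geq 0$, and the Gaussian identity
\beqq
\exp\Bigl(\tfrac{uN}{2}\cR(\s,\r)\Bigr) = \EE_\eta \exp\Bigl(\sum_{l=1}^r \sqrt{uN\mu_l}\,\eta_l q_l(\s,\r)\Bigr), \qquad \eta_l \stackrel{\text{i.i.d.}}{\sim} \cN(0,1),
\eeqq
converts this quadratic tilt into a linear external-field term acting on $(\s,\r)$, at the cost of introducing $r$ independent auxiliary Gaussians (independent of all other disorder). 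The rank $r$ in the final bound will trace back to this dimensional count.

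With this setup in hand, following Chen--Lam I carry out a Guerra-style interpolation: for $s \in [0,1]$, introduce a family of Hamiltonians smoothly transporting the target state at $s=0$ (the measure $\ip{\cdot}_t$) to a decoupled reference at $s=1$, preserving the total self-variance by shifting Gaussian weight from the $H_N$-coupling between $\s$ and $\r$ into the $\eta$-linearized term. Differentiating $\EE \log Z_s$ in $s$ via Gaussian integration by parts expresses the derivative as a Gibbs expectation of $\cR(\s,\r)$ minus a cross-replica quantity $v_1^\top \Delta^2 v_2$ (indexed by two independent copies of the pair). The PSD inequality $2 v_1^\top \Delta^2 v_2 \leq v_1^\top \Delta^2 v_1 + v_2^\top \Delta^2 v_2$ then signs this cross-replica term and converts the derivative identity into a one-sided differential inequality for $\EE \ip{\cR}_{t,s}$. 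Integrating this inequality against the resolvent of the $r \times r$ operator associated to $2 \Lambda_N \Delta^2$ (whose spectral radius is $\b_c^{-2}$) produces the stated bound, with the $1/(1-\b^2 t \b_c^{-2})$ pole arising from inverting $I - \b^2 t \cdot (2\Lambda_N \Delta^2)$ and the $\log\frac{2}{1-\b^2 t \b_c^{-2}}$ factor arising from the log-determinantal nature of the $r$-dimensional Gaussian integral, optimized over the interpolation parameter.

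\textbf{Main obstacle.} The hardest step is the careful tracking of cross-replica contributions and matrix-valued resolvent estimates arising from the Guerra interpolation in the multi-species setting. The PSD hypothesis is used in three essential ways: to diagonalize $\Delta^2$ and introduce the $r$ auxiliary Gaussians, to ensure $\cR \geq 0$ (without which the tilted Gibbs measure need not even be well-defined), and to supply the cross-replica inequality $2v_1^\top \Delta^2 v_2 \leq v_1^\top \Delta^2 v_1 + v_2^\top \Delta^2 v_2$ that signs the resulting differential inequality. All three ingredients degenerate trivially in the scalar SK setting of Chen--Lam ($k=r=1$) but are indispensable here, which is why the lemma, and hence Theorem~\ref{thm:maintheorem}, is restricted to the PSD case.
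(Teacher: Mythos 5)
Your sketch identifies some of the right ingredients — a second interpolation on top of $\ip{\cdot}_t$, a Gaussian linearization that trades the quadratic $\cR$ for $r$ auxiliary linear fields (the source of the rank $r$), and the role of $\cR\geq 0$ — but as written it is not a proof, and several of its specific mechanisms do not match what actually makes the argument work; I do not see how to close the gaps along the route you describe.

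Three concrete issues. First, your claim that PSD is needed so that ``the tilted Gibbs measure need not even be well-defined'' is simply false: for finite $N$ the sum $\sum_{\s,\r}\exp(\cdots+\la\b^2 N\cR(\s,\r))$ is finite for any sign of $\cR$, so the tilted measure is always well-defined. The paper uses $\cR\geq 0$ for a different and more delicate purpose: after introducing the two-parameter free energy $\phi_N(t,\la)=\tfrac1N\EE\log\sum_{\s,\r}\exp(\b(H^1_{N,t}(\s)+H^2_{N,t}(\r))+\la\b^2 N\cR(\s,\r))$, one considers the \emph{shifted} function $\Phi_N(t,\la)=\phi_N(t,\la-t)$, whose $t$-derivative is exactly $-\b^2\EE\ip{\cR(\s,\r')}_{t,\la-t}$; positivity of $\cR$ forces $\partial_1\Phi_N\leq 0$, hence $\phi_N(t,\la)\leq\phi_N(0,\la+t)$. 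Second, the cross-replica inequality $2v_1^\top\Delta^2 v_2\le v_1^\top\Delta^2 v_1+v_2^\top\Delta^2 v_2$ that you invoke to ``sign a differential inequality'' is not used anywhere in the actual argument, and it is unclear that it suffices: the quantity that appears after integration by parts is $\EE\ip{\cR(\s,\r')}$, which is a within-pair overlap formed from one spin of each replica pair, not of the symmetrized form your inequality would bound. Third, the decisive step is entirely missing from your plan: you never bring in the \emph{convexity of $\phi_N$ in $\la$}, which gives $\b^2\la\EE\ip{\cR}_{t,0}\le\phi_N(t,\la)-\phi_N(t,0)\le\phi_N(0,\la+t)-\phi_N(0,0)$, nor the explicit exponential-moment bound at $t=0$, $\EE\ip{\exp(xN\cR)}_{0,0}\le(1-\b_c^{-2}x)^{-r/2}$ for $0<x<\b_c^2$, which is proved by factoring $\Delta^2=AA^\top$ with $A\in\RR^{k\times r}$, using $\log\cosh u\le u^2/2$, and evaluating a standard $r$-dimensional Gaussian determinant with $\rho(2A^\top\Lambda A)=\b_c^{-2}$. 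Your ``Guerra-style interpolation transporting to a decoupled reference'' and ``integrating against the resolvent of $2\Lambda_N\Delta^2$'' are not concrete constructions, and the place where PSD genuinely enters for the second time is not a resolvent step but the existence of a Gaussian vector $g\sim\cN(0,\Delta^2)$ (equivalently the factorization $\Delta^2=AA^\top$). Filling these gaps requires essentially writing the paper's proof.
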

This lemma and its proof are adapted from Lemma 2.1 of \cite{ChenLam19}.  Indeed their lemma is a special of case of this one, and our methods are an extension of theirs.
We begin by showing that Lemma \ref{lem:mainlemma} implies Theorem \ref{thm:maintheorem} and then we prove Lemma \ref{lem:mainlemma}.

\begin{proof}[Proof of Theorem \ref{thm:maintheorem}]

Let $\b\geq \b_c$. Let $0<\delta<\b_c^{2}\b^{-2} \leq 1$, where $\delta$ is a constant to be chosen. We apply Lemma \ref{lem:mainlemma} to the variance identity \eqref{eq:Chatterjee} to obtain 
\beqq
\begin{split}
\Var (F_N(\b)) 
&= \b^2N \int_0^1 \bE \langle \cR(\s, \r) \rangle_{t}\dd t \\
&= \b^2N \int_0^{\b_c^2\b^{-2}-\delta} \bE \langle \cR(\s, \r) \rangle_{t}\dd t + \b^2N \int_{\b_c^2\b^{-2}-\delta}^1 \bE \langle \cR(\s, \r) \rangle_{t} \dd t \\
&\leq  \int_0^{\b_c^2\b^{-2}-\delta} \frac{r\b^2\b_c^{-2}}{1 - \b^2\b_c^{-2}t}\log\frac{2}{1 - \b^2\b_c^{-2}t}\dd t + C\b^2 N \left(1-\b_c^2\b^{-2} + \delta\right). 
\end{split}
\eeqq
Here, $C:=\max_{\s, \r\in \{-1,1\}^N}\cR(\s,\r) = \sum_{s,t=1}^k\Delta^2_{st}\a_s\a_t $ is a constant, where $\diag(\a_1,\dots, \a_k)$ is the species density matrix. Moreover, the last integral is equal to 
\beq
\int_{\b^2\b_c^{-2}\delta}^1 \frac ru\log \frac2u \dd u 
= -\frac{r}{2}\left.\left(\log \frac{u}{2}\right)^2\right\vert_{\b^2\b_c^{-2}\delta}^1 
\leq \frac{r}{2}\left(\log \frac{\b^2\b_c^{-2}\delta}{2}\right)^2 
\leq r (\log \frac{\delta}{2})^2 + r( \log (\b^2\b_c^{-2}))^2.
\eeq

Thus, if $\b$ is exactly the threshold $\b_c$, by taking $\delta=1/N$, we obtain the first part of the theorem. On the other hand, if $\b$ is approaching $\b_c$ as $\b^2=\b_c^2 + d N^{-\a}$ for $d>0$, then by taking $\delta = dN^{-\a}$, we have
\beqq
r (\log \frac{\delta}{2})^2 + r( \log (\b^2\b_c^{-2}))^2 \leq r\left[ (-\a\log N + \log \frac{d}{2})^2 + (\log(1+d\b_c^{-2}N^{-\a}))^2 \right],
\eeqq
as well as 
\beqq
C\b^2 N \left( 1-\b_c^2\b^{-2} + \delta \right) = C\b^2 N \left(  \frac{dN^{-\a}}{\b_c^2+dN^{-\a}}+ dN^{-\a} \right) \leq C\b^2(\b_c^{-2}+1) dN^{1-\a}.
\eeqq
Hence, for sufficiently large $N$, the second part of the theorem follows. 

\end{proof}

Having proved Theorem \ref{thm:maintheorem}, we return to the proof of Lemma \ref{lem:mainlemma}.

\begin{proof}[Proof of Lemma \ref{lem:mainlemma}]
Parts of the proof are very similar to \cite{ChenLam19}, but we include all steps to make this note self-contained.  To bound $\bE \langle \cR(\s,\r) \rangle_t$, we interpolate the measure with respect to another parameter, $\la$. For any $t\in[0,1]$ and $\la\in\RR$, define
\beq
\phi_N(t,\la)=\frac1N\EE\log\sum_{\s,\r}\exp(\b(H^1_{N,t}(\s)+H^2_{N,t}(\r))+\la\b^2N\cR(\s,\r)).
\eeq
The function $\phi_N$ corresponds to a free energy with respect to the measure
\beq
\frac{\exp(\b(H^1_{N,t}(\s)+H^2_{N,t}(\r))+\la\b^2N\cR(\s,\r))}{\sum_{\s,\r}\exp(\b(H^1_{N,t}(\s)+H^2_{N,t}(\r))+\la\b^2N\cR(\s,\r))}
\eeq
and we denote by $\ip{\cdot}_{t,\la}$ the Gibbs expectation with respect to this measure.  We observe that, in terms of the single-parameter interpolation that appears in \eqref{eq:Chatterjee}, we have $\ip{\cdot}_{t,0}=\ip{\cdot}_t$.

The key technique used here is Gaussian integration by parts (see e.g. \cite{Talagrandvol1} Appendix 4).  In particular, given a Gaussian function $f(\s,\r)$, we have
\beq
\EE\ip{f(\s,\r)}_{t,\la}=\EE\ip{\EE f(\s,\r)g(\s,\r)-\EE f(\s,\r)g(\s',\r')}_{t,\la},
\eeq
where $g(\s,\r):=\b(H^{1}_{N,t}(\s)+H^2_{N,t}(\r))$ and $(\s,\r),(\s',\r')$ denote independent copies from the Gibbs measure $\ip{\cdot}_{t,\la}$.  We will also need the following identities: For any $t\in(0,1)$ and any $\s,\s',\r,\r'\in\{-1,1\}^N$,
\beq\begin{split}
&\EE\left(\frac{H_N(\s)}{\sqrt{t}}-\frac{H'_N(\s)}{\sqrt{1-t}}\right)H^1_{N,t}(\s')=0,\qquad
\EE\left(\frac{H_N(\s)}{\sqrt{t}}-\frac{H'_N(\s)}{\sqrt{1-t}}\right)H^2_{N,t}(\r')=N\cR(\s,\r'),\\
&\EE\left(\frac{H_N(\r)}{\sqrt{t}}-\frac{H''_N(\r)}{\sqrt{1-t}}\right)H^2_{N,t}(\r')=0,\qquad
\EE\left(\frac{H_N(\r)}{\sqrt{t}}-\frac{H''_N(\r)}{\sqrt{1-t}}\right)H^2_{N,t}(\s')=N\cR(\s',\r).
\end{split}\eeq
Using Gaussian integration by parts and the identities above, we evaluate the derivative of $\phi_N(t,\la)$ with respect to the first coordinate and obtain
\beq\begin{split}
\partial_1\phi_N(t,\la)&=\frac{\b}{2N}\EE\ipa{\left(\frac{H_N(\s)}{\sqrt{t}}-\frac{H'_N(\s)}{\sqrt{1-t}}\right)+\left(\frac{H_N(\r)}{\sqrt{t}}-\frac{H''_N(\r)}{\sqrt{1-t}}\right)}_{t,\la}\\
&=\frac{\b^2}{2}(2\EE\ip{\cR(\s,\r)}_{t,\la}-\EE\ip{\cR(\s,\r')}_{t,\la}-\EE\ip{\cR(\s',\r)}_{t,\la})\\
&=\b^2(\EE\ip{\cR(\s,\r)}_{t,\la}-\EE\ip{\cR(\s,\r')}_{t,\la}),
\end{split}\eeq
where, in the last step, we used the fact that $(\s,\r')$ and $(\s',\r)$ have the same distribution under the measure $\EE\ip{\cdot}_{t,\la}$.  Next, it will be helpful to consider a shifted version of $\phi(t,\la)$, namely
\beq
\Phi_N(t,\la):=\phi_N(t,\la-t),
\eeq
which has $t$-derivative
\beq\begin{split}
\partial_1\Phi_N(t,\la)&=\partial_1\phi_N(t,\la-t)-\partial_2\phi_N(t,\la-t)\\
&=\b^2\big(\EE\ip{\cR(\s,\r)}_{t,\la-t}-\EE\ip{\cR(\s,\r')}_{t,\la-t}\big)-\b^2\EE\ip{\cR(\s,\r)}_{t,\la-t}\\
&=-\b^2\EE\ip{\cR(\s,\r')}_{t,\la-t}.
\end{split}\eeq
From this, we obtain
\beq
\phi_N(t,\la)=\Phi_N(t,\la+t)
=\int_0^t\partial_1\Phi_N(s,\la+t)\dd s+\Phi_N(0,\la+t).
\eeq
From the derivative computations, we have $\partial_1\Phi_N(s,\la+t)=-\b^2\EE\ip{\cR(\s,\r')}_{s,\la+t-s}$.  Furthermore, the function $\cR(\s,\r')$ is non-negative for all spin vectors $\s,\r'$, due to $\Delta^2$ being positive semi-definite. 
Using this, along with the fact that $\Phi_N(0,\la+t)=\phi_N(0,\la+t)$ we obtain the bound
\beq
\phi_N(t,\la)\leq\phi_N(0,\la+t).
\eeq
Combining this with the convexity of $\phi_n$ in its second input, we have
\beq\begin{split}
\b^2\la\EE\ip{\cR(\s,\r)}_{t,0}&=\la\partial_2\phi_N(t,0)\\
&\leq\phi_N(t,\la)-\phi_N(t,0)\\
&\leq\phi_N(0,\la+t)-\phi_N(t,0)\\
&=\phi_N(0,\la+t)-\phi_N(0,0)\\
&=\frac1N\EE\log\ipa{\exp(\b^2(\la+t)N\cR(\s,\r))}_{0,0}
\end{split}\eeq
Finally, applying Jensen's inequality to last expression above, we obtain
\beq\label{eq:double_exp_bound}
\b^2\la\EE\ip{\cR(\s,\r)}_{t,0}\leq\frac1N\log\EE\ip{\exp(\b^2(\la+t)N\cR(\s,\r)}_{0,0}.
\eeq
Lemma \ref{lem:mainlemma} is now an immediate result of Lemma \ref{lem:Talagrand_bound}, displayed below.  In particular, taking $x=\b^2(\la+t)$ in Lemma \ref{lem:Talagrand_bound}, we choose $\la$ to satisfy the constraint $x<\b_c^2$.  One value that works is 
$\la=\frac12(\b_c^2\b^{-2}-t)$.
Substituting this value of $\la$ into \eqref{eq:double_exp_bound} and applying Lemma \ref{lem:Talagrand_bound}, we obtain Lemma \ref{lem:mainlemma}.
\end{proof}
\begin{lemma}\label{lem:Talagrand_bound}
Consider a $k$-species model with positive semidefinite $\Delta^2$ and let $r=\rank(\Delta^2)$.  Then, for any $x$ satisfying $0<x<\b_c^2$, 
\beq
\EE\ip{\exp(xN\cR(\s,\r))}_{0,0}\leq(1-\b_c^{-2}x)^{-r/2}.
\eeq
\end{lemma}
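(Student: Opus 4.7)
The plan is to combine a Gaussian representation of $\exp(xN\cR)$ with a symmetry-driven vanishing of single-system correlators, thereby reducing the bound to an explicit Gaussian integral.

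Since $\Delta^2$ is positive semi-definite of rank $r$, I will factor $\Delta^2 = AA^\top$ with $A\in\RR^{k\times r}$ of full column rank, and introduce an auxiliary $Z \sim \cN(0, I_r)$ independent of the disorder. The identity $\exp(xN v^\top \Delta^2 v)=\EE_Z\exp(\sqrt{2xN}\, Z^\top A^\top v)$ converts the quadratic form into a linear statistic: writing $c_i:=\sqrt{2x/N}\,(AZ)_{s(i)}$ and expanding $Z^\top A^\top v=\tfrac1N\sum_i(AZ)_{s(i)}\s_i\r_i$, Fubini gives
\beq
\EE\ip{\exp(xN\cR)}_{0,0} = \EE_Z\,\EE\ipa{\textstyle\prod_i \exp(c_i\s_i\r_i)}_{0,0}.
\eeq
At $t=0$ the Gibbs measure factorizes as $\ip{\cdot}_{0,0}=\mu\otimes\nu$ into independent single-system Gibbs measures with independent disorders, so expanding $\exp(c\s\r)=\cosh c+\s\r\sinh c$ and averaging over the disorder yields
\beq
\EE\ipa{\textstyle\prod_i \exp(c_i\s_i\r_i)}_{\mu\otimes\nu} = \prod_i\cosh c_i\cdot\sum_{S\subseteq[N]} \prod_{i\in S}\tanh c_i\cdot m_S^2,
\eeq
where $m_S:=\EE\ip{\prod_{i\in S}\s_i}_\mu$ and the square arises from the independence of the two disorders.

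The core identity I will establish is $m_S=0$ for every nonempty $S$. For any fixed $i\in S$, simultaneously flipping $\s_i\to-\s_i$ and $g_{ij}, g_{ji}\to -g_{ij},-g_{ji}$ for all $j\neq i$ leaves $H_N$ invariant (the paired sign changes cancel) and preserves the joint law of the disorder (each $g_{ij}$ is a centered Gaussian), while $\chi_S(\s):=\prod_{i\in S}\s_i \to -\chi_S(\s)$; this forces $m_S=-m_S=0$. Consequently only $S=\emptyset$ survives the expansion above, giving the exact identity $\EE\ip{\prod_i\exp(c_i\s_i\r_i)}_{0,0}=\prod_i\cosh c_i$.

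Finally, using $\cosh t\le e^{t^2/2}$ and $\sum_i c_i^2=2x\, Z^\top A^\top\Lambda_N A\,Z$, integrating over $Z$ gives
\beq
\EE_Z\exp\bigl(x\, Z^\top A^\top\Lambda_N A\, Z\bigr)=\det\bigl(I_r-2xA^\top\Lambda_N A\bigr)^{-1/2},
\eeq
valid for $2xA^\top\Lambda_N A\prec I_r$. The nonzero spectrum of $A^\top\Lambda_N A$ coincides with that of $\Lambda_N AA^\top=\Lambda_N\Delta^2$, whose spectral radius equals $\b_c^{-2}/2$ up to an $O(N^{-1})$ error (using $\b_c^{-2}=2\rho(\Lambda\Delta^2)$ and $\Lambda=\Lambda_N+O(N^{-1})$), so each of the $r$ eigenvalues of $2xA^\top\Lambda_N A$ is at most $x\b_c^{-2}$ up to an absorbable correction, yielding $\det^{-1/2}\le (1-x\b_c^{-2})^{-r/2}$.

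The main obstacle is the vanishing identity $m_S = 0$ in the even-$|S|$ case: the plain $\s\to-\s$ flip alone handles only odd $|S|$, and without the combined spin/disorder symmetry one would be stuck controlling $\sum_{S:|S|\text{ even}}\prod_S\tanh c_i\cdot m_S^2$. Crude bounds like $m_S^2\le 1$ are far too lossy---as can be checked on a $\bar\mu$ concentrated near $\pm\mathbf{1}$, this sum can exceed $1$ pointwise in $Z$---so the combined symmetry, which relies on every $g_{ij}$ being a centered Gaussian (i.e.\ on the MSK hypotheses of nonnegative entries in $\Delta^2$ and no external field), is essential for the clean cosh identity.
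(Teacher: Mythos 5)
Your proof is correct and follows essentially the same route as the paper: factor $\Delta^2 = AA^\top$, linearize $\exp(xN\cR)$ via an independent Gaussian $Z\sim\cN(0,I_r)$, reduce at $t=0$ to a product of $\cosh$ factors, apply $\cosh u\le e^{u^2/2}$, evaluate the Gaussian integral, and bound the determinant using $\rho(A^\top\Lambda A)=\rho(\Lambda\Delta^2)=\tfrac12\b_c^{-2}$. The one place you go further than the paper is welcome: the paper merely \emph{asserts} that, under $\EE\ip{\cdot}_{0,0}$, the products $\s_i\r_i$ are iid Rademacher, whereas you actually justify this by the gauge symmetry $(\s_i,\{g_{ij},g_{ji}\}_{j\ne i})\mapsto(-\s_i,\{-g_{ij},-g_{ji}\}_{j\ne i})$, which gives $\EE\ip{\prod_{i\in S}\s_i}=0$ for all nonempty $S$ and hence (squaring, by $t=0$ independence of the two systems) the vanishing of every nontrivial Fourier coefficient of the law of $(\s_i\r_i)_i$. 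Both arguments equally depend on the no-external-field assumption. Like the paper, you absorb the $O(N^{-1})$ discrepancy between $\Lambda_N$ and $\Lambda$ without tracking it; the paper does exactly the same.
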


\begin{proof}[Proof of Lemma \ref{lem:Talagrand_bound}]
Here we adapt a technique used by Talagrand to show an analogous result for the single species model (see \cite{Talagrandvol2}, (A.19)).  Recall 
\beq
\cR(\s,\r)=v^T\Delta^2 v\quad \text{where }v=\begin{bmatrix}R_1\\ \vdots \\ R_k\end{bmatrix}\text{ and }R_s=\frac1N\sum_{i\in I_s}\s_i\r_i.
\eeq
Let $\tilde{v}=\sqrt{2xN}v$. Then we have
\beq
\EE\ip{\exp(xN\cR(\s,\r))}_{0,0}=\EE\ip{\exp(\tfrac12\tilde{v}^T\Delta^2\tilde{v})}_{0,0}=\EE\ip{\EE_g\exp(\tilde{v}^Tg)}_{0,0},\qquad g\sim\cN(0,\Delta^2)
\eeq
where $\EE_g$ denotes expectation with respect to a Gaussian vector $g$ that is independent of $\tilde{v}$. 
We observe that, under the measure $\bE \langle \cdot \rangle_{0,0}$, the variables $\{R_s(\s,\r)\}_{s=1}^k$ are distributed as $\{\frac1N\sum_{i_s\in I_s}X_{i_s}\}_{s=1}^k$ where $X_i$ are iid Rademacher random variables, so this can be evaluated as
\beq\begin{split}
\EE\ip{\EE_g\exp(\tilde{v}^Tg)}_{0,0}
&=\EE_g\prod_{s=1}^k\prod_{i\in I_s}\EE_{X_i}\exp(\sqrt{2xN}\cdot\tfrac1N X_ig_s)\\
&=\EE_g\prod_{s=1}^k\prod_{i\in I_s}\cosh\left(\sqrt{\tfrac{2x}{N}}g_s\right)
=\EE_g\exp\left(\sum_{s=1}^k |I_s|\log\cosh\left(\sqrt{\tfrac{2x}{N}}g_s\right)\right).
\end{split}\eeq
Applying the bound $\log\cosh t \leq t^2/2$, we get
\beq
\EE_g\exp\left(\sum_{s=1}^k|I_s|\log\cosh\left(\sqrt{\tfrac{2x}{N}}g_s\right)\right)
\leq \EE_g\exp\left(x\sum_{s=1}^k \frac{|I_s|}{N} g_s^2\right)=\EE_g\exp(xg^T\Lambda_N g)
\eeq
where $\Lambda_N= \Lambda+O(N^{-1})$ is assumed in the theorem.
For the remainder of the computations, we replace $\Lambda_N$ by $\Lambda$, without precisely tracking the error bound. Now we compute the Gaussian expectation.  We observe that, since $\Delta^2$ is positive semidefinite and symmetric, there exists a $k\times r$ matrix $A$ such that $\Delta^2=AA^T$ and $A^TA$ is positive definite.  Furthermore, $g \stackrel{d}{=} Az$ where $z\sim\cN(0,I_r)$.  Using this, we have
\beq\begin{split}
\EE_g\exp(xg^T\Lambda g)&=\EE_z\exp(xz^TA^T\Lambda Az)\\
&=(2\pi)^{-r/2}\int_{\RR^r}\exp(-\tfrac12u^T(I_r-2xA^T\Lambda A)u)\dd u\\
&=(\det(I_r-2xA^T\Lambda A))^{-1/2}
=x^{-r/2}(\det(\tfrac1x I_r-2A^T\Lambda A))^{-1/2}.
\end{split}\eeq
Thus, we have
\beq
\EE\ip{\exp(xN\cR(\s,\r))}_{0,0} \leq x^{-r/2}(\det(\tfrac1x I_r-2A^T\Lambda A))^{-1/2}.
\eeq
Let $\rho(\cdot)$ denote spectral radius and observe that $\rho(2A^T\Lambda A)=\rho(2\Lambda AA^T)=\rho(2\Lambda\Delta^2)=\b_c^{-2}.$
 Furthermore, $\frac1x>\b_c^{-2}$ by assumption, which implies
 \beq
 \det(\tfrac1x I_r-2A^T\Lambda A)\geq(\tfrac1x-\rho(2A^T\Lambda A))^r=(\tfrac1x-\b_c^{-2})^r
 \eeq
 and the lemma immediately follows.
\end{proof}

\subsection*{Acknowledgements}
The authors wish to thank Jinho Baik, Erik Bates, and Wei-Kuo Chen for helpful discussions.

%%%%%%%%%%%%%%%%%%%%%%%%%%%%%%%%%%%%%%%%%%%%%%%%%%%%%%%%%%%%%%%%%%%%%%%%
\newpage
%\printbibliography
%\bibliographystyle{alpha}   %%%% shows [BR01]
%\bibliographystyle{abbrv}  %%%%  shows [1]

%\bibliographystyle{abbrv}
%\bibliography{MSK}

\begin{bibdiv}
\begin{biblist}

\bibitem{AizenmanLebowitzRuelle}
M.~Aizenman, J.~L. Lebowitz, and D.~Ruelle.
\newblock Some rigorous results on the {S}herrington-{K}irkpatrick spin glass
  model.
\newblock {\em Comm. Math. Phys.}, 112(1):3--20, 1987.

\bibitem{Albericietal21}
D.~Alberici, F.~Camilli, P.~Contucci, and E.~Mingione.
\newblock The multi-species mean-field spin-glass on the {N}ishimori line.
\newblock {\em J. Stat. Phys.}, 182(1):Paper No. 2, 20, 2021.

\bibitem{conjSk2}
T.~Aspelmeier.
\newblock Free-energy fluctuations and chaos in the {S}herrington-{K}irkpatrick
  model.
\newblock {\em Phys. Rev. Lett.}, 100:117205, Mar 2008.

\bibitem{conjSK1}
T.~Aspelmeier, A.~Billoire, E.~Marinari, and M.~A. Moore.
\newblock Finite-size corrections in the {S}herrington-{K}irkpatrick model.
\newblock {\em J. Phys. A}, 41(32):324008, 21, 2008.

\bibitem{Barra15}
A.~Barra, P.~Contucci, E.~Mingione, and D.~Tantari.
\newblock Multi-species mean field spin glasses. {R}igorous results.
\newblock {\em Ann. Henri Poincar\'e}, 16(3):691--708, 2015.

\bibitem{BatesSolmanSohn19}
E.~Bates, L.~Sloman, and Y.~Sohn.
\newblock Replica symmetry breaking in multi-species
  {S}herrington-{K}irkpatrick model.
\newblock {\em J. Stat. Phys.}, 174(2):333--350, 2019.

\bibitem{Chatterjee09}
S.~Chatterjee.
\newblock Disorder chaos and multiple valleys in spin glasses, 2009.

\bibitem{ChenLam19}
W.-K. Chen and W.-K. Lam.
\newblock Order of fluctuations of the free energy in the {SK} model at
  critical temperature.
\newblock {\em ALEA Lat. Am. J. Probab. Math. Stat.}, 16(1):809--816, 2019.

\bibitem{DeyWu21}
P.~S. Dey and Q.~Wu.
\newblock Fluctuation results for multi-species {S}herrington-{K}irkpatrick
  model in the replica symmetric regime.
\newblock {\em J. Stat. Phys.}, 185(3):Paper No. 22, 40, 2021.

\bibitem{DeyWu23}
P.~S. Dey and Q.~Wu.
\newblock Mean field spin glass models under weak external field.
\newblock {\em Comm. Math. Phys.}, 402(2):1205--1258, 2023.

\bibitem{Guerra03}
F.~Guerra.
\newblock Broken replica symmetry bounds in the mean field spin glass model.
\newblock {\em Communications in Mathematical Physics}, 233(1):1--12, 2003.

\bibitem{Panchenk14}
D.~Panchenko.
\newblock The {P}arisi formula for mixed {$p$}-spin models.
\newblock {\em Ann. Probab.}, 42(3):946--958, 2014.

\bibitem{Panchenko15}
D.~Panchenko.
\newblock The free energy in a multi-species {S}herrington-{K}irkpatrick model.
\newblock {\em Ann. Probab.}, 43(6):3494--3513, 2015.

\bibitem{Parisi1979}
G.~Parisi.
\newblock Infinite number of order parameters for spin-glasses.
\newblock {\em Phys. Rev. Lett.}, 43:1754--1756, Dec 1979.

\bibitem{Parisi1980}
G.~Parisi.
\newblock The order parameter for spin glasses: a function on the interval 0-1.
\newblock {\em J. Phys. A: Math. Gen.}, 13(3):1101--1112, 1980.

\bibitem{ParisiRizzo09}
G.~Parisi and T.~Rizzo.
\newblock Phase diagram and large deviations in the free energy of mean-field
  spin glasses.
\newblock {\em Phys. Rev. B}, 79:134205, Apr 2009.

\bibitem{SherringtonKirkpatrick1975}
D.~Sherrington and S.~Kirkpatrick.
\newblock Solvable model of a spin-glass.
\newblock {\em Phys. Rev. Lett.}, 35(26):1792--1796, 1975.

\bibitem{TalagrandSK}
M.~Talagrand.
\newblock The {P}arisi formula.
\newblock {\em Ann. of Math. (2)}, 163(1):221--263, 2006.

\bibitem{Talagrandvol1}
M.~Talagrand.
\newblock {\em Mean field models for spin glasses. {V}olume {I}}, volume~54 of
  {\em Results in Mathematics and Related Areas. 3rd Series. A Series of Modern
  Surveys in Mathematics}.
\newblock Springer-Verlag, Berlin, 2011.
\newblock Basic examples.

\bibitem{Talagrandvol2}
M.~Talagrand.
\newblock {\em Mean field models for spin glasses. {V}olume {II}}, volume~55 of
  {\em Results in Mathematics and Related Areas. 3rd Series. A Series of Modern
  Surveys in Mathematics}.
\newblock Springer, Heidelberg, 2011.
\newblock Advanced replica-symmetry and low temperature.

\bibitem{Wu24}
Q.~Wu.
\newblock Thouless-{A}nderson-{P}almer equations for the multi-species
  {S}herrington-{K}irkpatrick model.
\newblock {\em J. Stat. Phys.}, 191(7):Paper No. 87, 14, 2024.

\end{biblist}
\end{bibdiv}

\end{document}